\newtheorem{definition}{Definition}
\newtheorem{remark}{Remark}
\newtheorem{fig}{Figure}
\newtheorem{theorem}{Theorem}
\newtheorem{proposition}{Proposition}
\newtheorem{conjecture}{Conjecture}
\newtheorem{problem}{Problem}
\newenvironment{proof}{\textbf{Proof} :}{$\square$}
\title{A Variation of Galvin and J\'onsson's approach to Sublattices of Free Lattices}
\author{Brian T. Chan}
\date{University of Calgary}
\begin{document}

\maketitle

\begin{abstract}
This article is part of my upcoming masters thesis which investigates the following open problem from the book, Free Lattices, by R.Freese, J.Jezek, and J.B. Nation published in 1995 \cite{FL}: ``Which lattices (and in particular which countable lattices) are sublattices of a free lattice?'' \\

Despite partial progress over the decades, the problem is still unsolved. There is emphasis on the countable case because the current body of knowledge on sublattices of free lattices is most concentrated on when these sublattices are countably infinite. \\

Galvin and J\'onsson's paper, \cite{DSFL}, characterized those sublattices of free lattices that are distributive by determining the distributive lattices which do not have any doubly reducible elements. In this article, a possible strategy towards solving this open problem is proposed. I will do this by extending a portion of Galvin and J\'onsson's paper \cite{DSFL}, about 54 years after its publication. To the best of my knowledge, the results derived in this article are new and not mentioned elsewhere in the literature. \\

Specifically, there appears to be a way to explicitly construct possible sublattices of free lattices in which: every proper subinterval is finite, the width of the lattice is finite. I will describe this approach I am developing in this article.
\end{abstract}

\section{Ladders, and related concepts}

Amongst lattices which are sublattices of free lattices the distributive laws are extremely strong. In 1959, F. Galvin and B. J\'onsson completely characterized those distributive lattices that are sublattices of free lattices in their paper \cite{DSFL}. Specifically, \\
 
\begin{theorem}[Galvin and J\'onsson, \cite{DSFL}] \label{the characterization}
 	A distributive lattice is (isomorphic to) a sublattice of a free lattice if and only if it is a countable lattice that is a countable linear sum of lattices where: each lattice in the linear sum is a one element lattice, an eight element boolean algebra $2 \times 2 \times 2$, or a direct product of the two element chain with a countable chain.
\end{theorem}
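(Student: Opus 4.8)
The plan is to prove the biconditional through two equivalences organized around the notion of a \emph{doubly reducible} element (one that is simultaneously a join of two strictly smaller elements and a meet of two strictly larger ones): first, that a distributive lattice is a sublattice of a free lattice exactly when it is countable and has no doubly reducible element; and second, that a distributive lattice has no doubly reducible element exactly when it is a linear sum of blocks each isomorphic to a one-element lattice, to $2\times2\times2$, or to $2\times C$ for a chain $C$. Combining these, together with the observation that countability of the whole lattice amounts to countability of the index chain and of each chain-factor, yields the stated characterization.

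For necessity I would first recall that every free lattice satisfies Whitman's condition (W), namely $u\wedge v\le x\vee y \Rightarrow u\le x\vee y$ or $v\le x\vee y$ or $u\wedge v\le x$ or $u\wedge v\le y$, and that (W) passes to sublattices. A direct application excludes doubly reducible elements: if $z=a\vee b=c\wedge d$ with $a,b<z<c,d$, then $c\wedge d\le a\vee b$ forces one of $c\le z$, $d\le z$, $z\le a$, $z\le b$, each contradicting the strict inequalities. Hence any sublattice of a free lattice, distributive or not, has no doubly reducible element. Countability I would obtain from the known fact that free lattices contain no uncountable chains (see \cite{FL}), so neither do their sublattices; once the structure theorem below exhibits $L$ as a linear sum of the three block types, an uncountable $L$ would force either an uncountable index chain or an uncountable chain-factor, in both cases producing an uncountable chain in $L$, a contradiction.

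The structural core is the classification of distributive lattices with no doubly reducible element, which I would carry out directly. Call $e$ a \emph{cut element} if it is comparable to every element of $L$; the cut elements form a chain $K$, and the elements lying strictly between consecutive members of $K$, together with their bounding cut elements, split $L$ into convex sublattices, the blocks, realizing $L$ as a linear sum along $K$. It then remains to classify a single cut-irreducible block $B$. The key local fact, valid in any distributive lattice with no doubly reducible element, is that whenever $p,q$ are incomparable, $p\vee q$ is meet-irreducible and $p\wedge q$ is join-irreducible; iterating this I would show that $B$ has width at most three, that a width-three block is forced to be exactly $2\times2\times2$, and that a width-two block is a union of two chains which distributivity and cut-irreducibility pin down to $2\times C$. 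The width bound (a four-element antichain must create a doubly reducible element) and the rigidity of the width-three case are, I expect, the main combinatorial obstacle, and the limiting behaviour when $K$ is dense will also need care.

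For sufficiency I would run this in reverse. Each allowed block is distributive with no doubly reducible element, and both properties are preserved under linear sums, so the assembled countable $L$ is distributive with no doubly reducible element; for distributive lattices the latter is equivalent to (W), which together with the semidistributivity that distributivity automatically supplies gives the two standard necessary conditions. The remaining, and I believe genuinely hardest, task is to upgrade these conditions into an actual embedding of the countable lattice $L$ into a free lattice. I would do this by an inductive construction assigning to the countably many elements of $L$ carefully chosen terms in $FL(\omega)$, embedding each finite or chain-like block and then splicing the assignments along the index chain, verifying order-faithfulness (no collapses and no spurious comparabilities) by Whitman's solution to the word problem for free lattices. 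Controlling this splicing across a possibly infinite or dense index chain, so that joins and meets straddling several blocks are computed correctly in $FL(\omega)$, is where the real work lies.
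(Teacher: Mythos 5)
Your plan reproduces exactly the architecture that the paper assembles for this theorem: necessity from Whitman's condition ruling out doubly reducible elements together with the countability of chains in free lattices (Proposition \ref{powerful}), the structural core being the classification of distributive lattices with no doubly reducible element (Theorem \ref{the restriction}), and sufficiency via an explicit embedding of each allowed linear sum into a free lattice. Note that the paper itself gives no proof of this statement --- it is quoted from Galvin and J\'onsson with the reader referred to \cite{DSFL} --- and your two admittedly sketched steps (the block classification, i.e.\ Lemmas 2--3 of \cite{DSFL} echoed in Propositions \ref{two two two} and \ref{ladder}, and the splicing of term assignments along the index chain) are precisely the content carried out there, so your proposal is a faithful reconstruction of the cited route rather than a divergence from it.
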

 
It should be noted that distributive lattices that are isomorphic to sublattices of free lattices must satisfy Whitman's condition. For that to happen, one has to require that no element in a distributive lattice is doubly reducible. Combining the distributive laws with this property turns out to be very restrictive. It played a crucial role in Galvin and J\'onsson's characterization: \\
 
\begin{theorem}[Galvin and J\'onsson, \cite{DSFL}] \label{the restriction}
 	A distributive lattice has no doubly reducible elements if and only if it is a linear sum of lattices where: each lattice in the linear sum is (isomorphic to) a one element lattice, an eight element boolean algebra $2 \times 2 \times 2$, or a direct product of the two element chain with a chain.
\end{theorem}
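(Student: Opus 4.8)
The plan is to prove the two directions separately, treating the forward (structural) direction as the substantial one. For the direction that every such linear sum has no doubly reducible element, I would first isolate a localization lemma: if $L = \bigoplus_{i \in \Lambda} L_i$ is a linear sum of lattices indexed by a chain $\Lambda$ and $a \in L_i$, then $a$ is join-reducible in $L$ if and only if it is join-reducible in $L_i$, and dually for meet-reducibility. The point is that if $b,c < a$ then $b$ and $c$ lie in $L_i$ or in lower summands, and the join of two elements not both in $L_i$ is forced to be the larger of the two (elements of distinct summands are comparable), so it cannot equal $a$ unless $b,c \in L_i$; the meet case is dual. Consequently $a$ is doubly reducible in $L$ exactly when it is doubly reducible inside its own summand, and it remains only to check the three admissible summands. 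The one-element lattice is trivial; in $2\times2\times2$ a direct inspection shows the join-reducible elements are exactly the three coatoms and the top while the meet-reducible ones are exactly the three atoms and the bottom, so the two sets are disjoint; and in $2\times C$ every element $(0,c)$ is join-irreducible and every $(1,c)$ is meet-irreducible. This disposes of the easy direction.

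For the converse I would argue that a distributive lattice $L$ with no doubly reducible element decomposes as the required linear sum. First I would pass to the canonical decomposition of $L$ into linearly indecomposable summands: the cut points are the elements comparable to every element of $L$ that split it into two sublattice-intervals, and the blocks lying between consecutive cut points are the indecomposable summands. The theorem then reduces to showing that an indecomposable distributive lattice $M$ with no doubly reducible element is a one-element lattice, $2\times2\times2$, or $2\times C$. Here I would partition $M$ into its join-reducible-only elements, its meet-reducible-only elements, and its doubly irreducible elements, and then use distributivity to constrain how these three classes sit relative to one another.

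The heart of the argument, and the step I expect to be the main obstacle, is a width bound: I would try to show that distributivity together with the absence of doubly reducible elements forces the width of $M$ to be at most $3$. The mechanism is that from a sufficiently large antichain one can manufacture a doubly reducible element using the distributive identity $(x \vee a) \wedge (x \vee b) = x \vee (a \wedge b)$. An element such as $a_1 \vee a_2$ is automatically join-reducible, and the difficulty is to arrange the remaining antichain elements so that the same element is simultaneously exhibited as a nontrivial meet; this forces delicate bookkeeping of the incomparabilities, since the naive witnesses need not have the right meet. Once the width is bounded the cases are width $1$ (a single point, as any longer chain is decomposable), width $2$ (which I would show is exactly $2\times C$ by reconstructing the two chains from the join-irreducible and the meet-irreducible elements), and width $3$ (which I would show is forced to be the single rigid lattice $2\times2\times2$).

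The remaining hurdle is the infinite case: because the chain $C$ need not be finite there is no induction on size, so both the reconstruction of $2\times C$ and the verification that the indecomposable blocks genuinely assemble into a linear sum must be carried out by purely order-theoretic arguments, and one must rule out pathological configurations in which cut points accumulate. I expect a Birkhoff-style reformulation to be a useful lens at least in the finite case: writing $L$ as the lattice of down-sets of its poset $P$ of join-irreducibles, the no-doubly-reducible condition becomes the statement that for every down-set $D$ either $D$ has at most one maximal element or $P \setminus D$ has at most one minimal element, from which the three admissible shapes of $P$ can be read off; extending this translation cleanly to the infinite setting is the final obstacle.
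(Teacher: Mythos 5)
First, a point of comparison: the paper does not actually prove this theorem itself --- it states it as Galvin and J\'onsson's result and explicitly refers the reader to \cite{DSFL} for the proof; the only piece it reproves is the width-two lemma (Proposition \ref{ladder}, via the modular generalization Proposition \ref{step 1}), and it does so by a route quite unlike yours: forbidden-sublattice reasoning from the $M_3$--$N_5$ theorem applied to three-generated gadget sublattices $G_L(a;b,c)$, followed by a well-ordering and transfinite induction adjoining the remaining elements one at a time. Your skeleton --- localize reducibility to summands, decompose into linearly indecomposable blocks at cut points, bound the width by $3$, then classify the width-$2$ and width-$3$ blocks --- is the correct architecture of the original Galvin--J\'onsson argument, and your easy direction (the localization lemma together with the inspection of the three admissible summands) is complete and correct as stated.

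The genuine gap is that everything of substance in the converse is announced rather than proved. The width bound, the identification of the width-$3$ block with $2\times2\times2$, and the identification of the width-$2$ block with $2\times C$ are exactly the core lemmas of \cite{DSFL} (stated in the paper as Propositions \ref{two two two} and \ref{ladder}), i.e.\ the entire content of the theorem, and at each of these points you say only that you ``would try'' the step and flag the bookkeeping as the main obstacle, without exhibiting the doubly reducible element that a $4$-element antichain must produce or carrying out the reconstruction of $2\times C$ from the irreducibles. Moreover, your proposed fallback lens --- Birkhoff's down-set representation --- fails in precisely the regime the theorem cares about: $C$ may be an arbitrary (for instance dense) chain, and an infinite distributive lattice need not be the lattice of down-sets of its join-irreducibles, so the translation ``$D$ has at most one maximal element or $P\setminus D$ at most one minimal element'' has no counterpart from which the infinite shapes can be read off. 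Note that the gadget technique of Proposition \ref{step 1} is engineered for exactly this difficulty: the sublattices $G_L(a;b,c)$ and the $M_3$--$N_5$ theorem pin down the local configurations independently of the cardinality of the chain, and the transfinite induction then assembles $2\times C$ with no finiteness or duality assumption; that, or Galvin and J\'onsson's own order-theoretic arguments, is the machinery your outline still needs in order to become a proof. A smaller but real outstanding item is the existence of the cut-point decomposition into linearly indecomposable blocks for arbitrary lattices (e.g.\ when cut points are dense), which you flag but do not resolve.
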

 
This property, that there are no doubly reducible elements, is implied by Whitman's condition. Moreover, the above theorem shows that any distributive lattice that has no doubly reducible elements automatically satisfies Whitman's condition. \emph{So their analysis in \cite{DSFL} can be interpreted as a study of distributive lattices that satisfy Whitman's condition}. \\

These three propositions, which are somewhat paraphrased from \cite{DSFL}, form the core of Galvin and J\'onsson's paper, the machinery on which their characterization is based on. \\

\begin{proposition}[Galvin and J\'onsson, \cite{DSFL} Lemma 5] \label{powerful}
	If a chain is a sublattice of a free lattice then that chain is countable.
\end{proposition}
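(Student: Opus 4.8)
The plan is to reduce the statement to a question about how many generators a chain can involve, and then to settle that question with a combinatorial argument built on Whitman's solution of the word problem. Throughout, write $\mathrm{FL}(X)$ for the free lattice on a set $X$, and for $w \in \mathrm{FL}(X)$ let $\mathrm{supp}(w) \subseteq X$ be the (finite) set of generators occurring in the canonical form of $w$.

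First I would record two easy facts. A lattice term over a countable alphabet is a finite labelled tree, so there are only countably many such terms; passing to the quotient by provable equality, $\mathrm{FL}(Y)$ is countable whenever $Y$ is countable. Moreover, for $Y \subseteq X$ the inclusion induces an embedding $\mathrm{FL}(Y) \hookrightarrow \mathrm{FL}(X)$ (a retraction $X \to Y$ exhibits a left inverse), and since every element has finite support we have $\mathrm{FL}(X) = \bigcup \{\mathrm{FL}(Y) : Y \subseteq X \text{ countable}\}$. Consequently, if a chain $C \subseteq \mathrm{FL}(X)$ has countable support $S = \bigcup_{w \in C}\mathrm{supp}(w)$, then $C \subseteq \mathrm{FL}(S)$, which is countable, and we are done. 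The whole problem therefore collapses to showing that \emph{the support of a chain is countable}.

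Suppose, for contradiction, that $C$ is uncountable; since each support is finite and a countable union of finite sets is countable, $S$ must then be uncountable and $\{\mathrm{supp}(w) : w \in C\}$ is an uncountable family of finite sets. Here I would invoke the $\Delta$-system (sunflower) lemma to extract an uncountable subchain $C' \subseteq C$ whose supports form a sunflower: there is a fixed finite root $R$ with $\mathrm{supp}(a) \cap \mathrm{supp}(b) = R$ for all distinct $a,b \in C'$, and the petals $\mathrm{supp}(a)\setminus R$ are nonempty and pairwise disjoint. (That the root is nonempty already reflects the free-lattice fact that comparable elements must share a generator, which follows from Whitman's condition since a generator is join- and meet-prime.)

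The heart of the argument, and the step I expect to be the main obstacle, is to contradict the existence of $C'$. The idea is to attach to each $a \in C'$ a \emph{countably-valued invariant} and to show it separates the chain. Concretely, I would fix finitely many retractions $\mathrm{FL}(X) \to \mathrm{FL}(R)$ obtained by fixing $R$ and collapsing every non-root generator to a prescribed element of $\mathrm{FL}(R)$ (for instance a ``low'' collapse to $\bigwedge R$ and a ``high'' collapse to $\bigvee R$), and record the resulting tuple of images of $a$ in the countable lattice $\mathrm{FL}(R)$. Since $\mathrm{FL}(R)$ is countable and $C'$ is uncountable, uncountably many elements of $C'$ share one invariant value; pick two of them, say $a < b$. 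The goal is to prove, using Whitman's condition together with the disjointness of the petals of $a$ and $b$, that $a \le b$ with identical invariants forces $a = b$, contradicting that $a$ and $b$ are distinct. Establishing this last implication is the delicate point: one must show, by induction on the canonical forms and repeated application of (W), that the petal generators (which occur in one element but not the other) cannot affect a genuine strict comparison once the behaviour over the root is pinned down, so that comparability must already be ``witnessed inside $\mathrm{FL}(R)$''. If a single pair of collapses proves insufficient to force equality, the natural remedy is to enlarge the invariant to a finer, but still countable, trace in a free lattice over $R$ together with finitely many auxiliary symbols; the essential content is unchanged, and the proof would conclude by the same counting contradiction.
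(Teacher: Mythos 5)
The paper itself offers no proof of this proposition --- it explicitly refers the reader to \cite{DSFL} --- so your attempt has to be measured against Galvin and J\'onsson's original argument. Your preparatory reductions are sound: $\mathrm{FL}(Y)$ is countable for countable $Y$ and embeds in $\mathrm{FL}(X)$, every element has finite support, so an uncountable chain forces uncountable support, and the $\Delta$-system lemma then yields an uncountable subchain whose supports form a sunflower with finite root $R$. But the gap is exactly where you flag it, and it is worse than ``delicate'': the invariant you actually propose fails to separate the chain. Take $R = \{r\}$ and the pair $a = r \wedge x$, $b = r \vee y$ with $x, y$ distinct petal generators. Then $\mathrm{supp}(a) \cap \mathrm{supp}(b) = \{r\} = R$, the petals $\{x\}$ and $\{y\}$ are disjoint, and $a < b$ (the comparability passes through $r$); yet the ``low'' collapse (petals $\mapsto \bigwedge R = r$) and the ``high'' collapse (petals $\mapsto \bigvee R = r$) both send $a$ and $b$ to $r$, so $a$ and $b$ carry identical invariants while $a \neq b$. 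Hence the implication ``comparable with equal invariants implies equal'' is false as instantiated, and your closing hedge --- enlarge the invariant to some finer, unspecified trace --- concedes rather than closes the gap: the entire mathematical content of the lemma is concentrated in precisely the step you leave open.

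It is worth contrasting this with the route Galvin and J\'onsson actually take, which does not run through supports or $\Delta$-systems at all. One covers $\mathrm{FL}(X)$ by countably many classes, putting two elements in the same class when their canonical forms agree up to a bijective renaming of generators (there are only countably many such templates over a fixed countable alphabet), and then proves --- by induction on canonical form, using Whitman's condition and the fact that generators are join- and meet-prime --- that each class is an antichain. A chain meets each antichain in at most one point, so every chain is countable. Note that this invariant separates my counterexample pair immediately, since $r \wedge x$ and $r \vee y$ have different templates; the template, unlike your collapsed images, remembers the shape that your retractions destroy. If you wish to salvage your counting framework, the lemma you must actually prove is of this antichain type, and the induction on canonical forms via (W) is unavoidable; the sunflower extraction, while correct, buys you nothing that the template partition does not already provide. (One small repair in passing: that comparable elements share a generator is most cleanly seen not from (W) but from the homomorphism onto the two-element chain sending the generators of the smaller element to the top and those of the larger to the bottom.)
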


\begin{proposition}[Galvin and J\'onsson, \cite{DSFL} Lemma 2] \label{two two two}
	A linearly indecomposable distributive lattice with no doubly reducible elements has a width of three if and only if it is isomorphic to $2 \times 2 \times 2$
\end{proposition}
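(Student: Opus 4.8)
The plan is to prove both implications, disposing of the converse by direct inspection and concentrating essentially all the work on the forward direction. For the converse, I would verify that $2 \times 2 \times 2$ enjoys all the stated properties: it is Boolean, hence distributive; its three atoms form a $3$-element antichain while any four of its eight elements contain a comparable pair, so its width is $3$; it has no doubly reducible element, since the bottom and the three atoms are join-irreducible and the three coatoms and the top are meet-irreducible, so every element is join-irreducible or meet-irreducible; and it is linearly indecomposable because no covering pair $x \prec y$ has the property that every element is $\le x$ or $\ge y$.

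For the forward direction, let $L$ satisfy the four hypotheses and fix a maximum antichain $\{a,b,c\}$. I would first record two observations. (i) By the width bound, every element of $L$ is comparable to at least one of $a,b,c$; otherwise a fourth incomparable element would produce a $4$-element antichain. (ii) Each pairwise join $a\vee b$, $a\vee c$, $b\vee c$ is a join of two incomparable strictly smaller elements, hence join-reducible, and therefore, since $L$ has no doubly reducible element, \emph{meet-irreducible}; dually, the three pairwise meets $a\wedge b$, $a\wedge c$, $b\wedge c$ are join-irreducible. In particular each meet-irreducible pairwise join has a unique upper cover, and each join-irreducible pairwise meet a unique lower cover.

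The heart of the argument is a collapse step showing that the sublattice generated by $\{a,b,c\}$ is isomorphic to $2 \times 2 \times 2$, with bottom $v = a\wedge b\wedge c$ and top $u = a\vee b\vee c$. The key mechanism is to test the ``mixed'' elements against the no-doubly-reducible hypothesis. Suppose, for instance, $a\wedge b \not\le c$, and set $d = a\wedge b$; then $d$ and $c$ are incomparable, so $d\vee c$ is join-reducible. Using distributivity I rewrite $d\vee c = (a\vee c)\wedge(b\vee c)$, and if $a\vee c$ and $b\vee c$ are incomparable this exhibits $d\vee c$ as a meet of two strictly larger elements, making $d\vee c$ doubly reducible, a contradiction. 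Hence $a\wedge b \not\le c$ forces $a\vee c$ and $b\vee c$ to be comparable, and a dual analysis of the element $(a\vee b)\wedge c = (a\wedge c)\vee(b\wedge c)$ applies pressure from the other side; iterating these obstructions over all three symmetric positions forces every pairwise meet down to $v$ and every pairwise join up to a coatom of $[v,u]$, so that $[v,u]$ is exactly the eight-element cube. I would then finish by showing $v$ and $u$ are the least and greatest elements of $L$: once $[v,u] \cong 2\times 2\times 2$ we know $u$ is join-reducible, hence meet-irreducible, so it has a unique upper cover $u^{+}$; if some element lay strictly above $u$, then (using observation (i) to check that every element is comparable to $u$) $L$ would decompose as the ordinal sum $\downarrow u \oplus \uparrow u^{+}$, contradicting linear indecomposability, and dually for $v$. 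Thus $L = [v,u] \cong 2\times 2\times 2$.

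The main obstacle I anticipate is precisely the collapse step together with the verification that no further elements survive. The clean deductions (i) and (ii) are forced, but completing the collapse requires a genuine case analysis on the comparabilities among the three pairwise joins and the three pairwise meets, and it is here that all three hypotheses must be used in concert: distributivity to rewrite the mixed terms, the width bound to suppress stray antichains and to guarantee comparability with $u$ and $v$, and the absence of doubly reducible elements to forbid the offending mixed elements. I expect the bookkeeping of the symmetric subcases to be the delicate part; the potentially infinite nature of $L$ causes no separate difficulty, since the argument ultimately traps all of $L$ inside the finite interval $[v,u]$.
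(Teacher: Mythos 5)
You should note at the outset that the paper contains no proof of this proposition: it is quoted as Lemma 2 of Galvin and J\'onsson with an explicit referral to \cite{DSFL}, so your attempt can only be judged against the statement itself. Your converse direction is fine, and your central engine is the right one: a join of two incomparable elements is join-reducible, hence must be meet-irreducible, and distributivity rewrites $(a\wedge b)\vee c$ as $(a\vee c)\wedge(b\vee c)$, so $a\wedge b\not\le c$ does force $a\vee c$ and $b\vee c$ to be comparable (and then the larger equals $u=a\vee b\vee c$). But the target you iterate toward is wrong. It is not true that the obstructions force ``every pairwise meet down to $v$ and every pairwise join up to a coatom'': take $\{a,b,c\}$ to be the three \emph{coatoms} of $2\times 2\times 2$ --- a maximum antichain satisfying all your recorded observations --- and the pairwise meets are three distinct atoms while all three pairwise joins equal $u$. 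What is actually forced is a dichotomy: either all three pairwise meets equal $v$, or (dually) all three pairwise joins equal $u$; mixed configurations die by arguments of exactly your type (e.g.\ if $a\wedge b>v$, $b\wedge c>v$, $a\wedge c=v$, one computes $b=(a\wedge b)\vee(b\wedge c)=(a\vee b)\wedge(b\vee c)$ and $b$ becomes doubly reducible), and either horn of the dichotomy yields the cube as the sublattice generated by $\{a,b,c\}$, since with all meets equal to $v$ distributivity gives $(a\vee b)\wedge(a\vee c)=a\vee(b\wedge c)=a$ and the joins are automatically pairwise incomparable. Your iteration, aimed at one configuration only, would stall without contradiction on the other horn; and in any case you acknowledge the case analysis is not carried out, so the heart of the forward direction remains a plan rather than a proof.

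The endgame has two further genuine gaps. First, controlling the sublattice generated by $\{a,b,c\}$ does not control the interval $[v,u]$: an element such as $y$ with $v<y<a$ is consistent with everything you have established (it would make $L$ look like $2\times 2\times 3$) and must be excluded by a separate doubly-reducible hunt --- here $y\vee b=(a\vee b)\wedge(y\vee b\vee c)$ is both join- and meet-reducible --- and other positions of $y$ inside $[v,u]$ need the uniqueness of relative complements in distributive lattices; your sketch never performs this interval-purity step, yet asserts ``$[v,u]$ is exactly the eight-element cube.'' Second, your closing move misfires twice: meet-irreducibility does \emph{not} give a unique upper cover $u^{+}$ in an infinite lattice (chains have no covers at all), and your observation (i) gives comparability with one of $a,b,c$, not with $u$, so elements incomparable to $u$ (or to $v$) require their own argument --- for instance, $x\parallel u$ with $x>a$ eventually forces $x\vee b=x\vee c=x\vee u$, whence $x=x\vee(b\wedge c)=x\vee u$ by distributivity, a contradiction. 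Once comparability with $u$ is in hand, no cover is needed: $\{x\in L : x>u\}$ is a sublattice precisely because $u$, being join-reducible, is meet-irreducible, and $L$ would then be the linear sum of the ideal below $u$ and this filter, contradicting linear indecomposability. So your architecture is sound and in the spirit of \cite{DSFL}, but the collapse endpoint is misstated, the purity and comparability steps are missing, and one quoted lemma is false as stated.
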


\begin{proposition}[Galvin and J\'onsson, \cite{DSFL} Lemma 3] \label{ladder}
	A linearly indecomposable distributive lattice with no doubly reducible elements has a width of two if and only if it is isomorphic to $2 \times C$ where $C$ is a chain.
\end{proposition}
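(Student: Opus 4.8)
The plan is to prove both implications, with the direction ``$2\times C$ has the four properties'' being routine and the converse carrying the real content. Throughout I take $C$ to be a chain with at least two elements, since $2\times C$ collapses to a two-element chain (of width one) when $|C|=1$. For the easy direction I would verify the four properties of $2\times C$ directly. Distributivity is inherited from the two factors. Width two follows by pigeonhole: among any three elements two share a first coordinate and are therefore comparable. For ``no doubly reducible elements'' I would observe that every $(0,c)$ is join-irreducible, since everything below it lies on the lower rail and a binary join of strictly smaller elements stays strictly below $(0,c)$; dually every $(1,c)$ is meet-irreducible. As each element has a fixed first coordinate, it is join-irreducible or meet-irreducible, hence never doubly reducible. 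Finally, linear indecomposability I would obtain by noting that any ordinal-sum cut $L = P \oplus Q$ into nonempty sublattices is obstructed: incomparable ``straddling'' pairs $(1,c),(0,c')$ with $c<c'$ cannot be separated by a cut (in $P\oplus Q$ any element of $P$ lies below any element of $Q$, so incomparable pairs stay in one part), and the rungs $(0,c)\prec(1,c)$ then force $P$ and $Q$ to overlap.

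For the converse, assume $L$ is distributive, of width two, linearly indecomposable, with no doubly reducible element. First I would record a cover dichotomy. If $x$ has two distinct lower covers $y_1,y_2$, then $y_1\vee y_2=x$ (it lies strictly above $y_1$ and is $\le x$, and $x$ covers $y_1$), so $x$ is join-reducible; the no-doubly-reducible hypothesis then makes $x$ meet-irreducible, so $x$ has at most one upper cover. Dually, two upper covers force at most one lower cover. Hence every element has at most two lower and at most two upper covers, and never two of each; wherever an element has two covers the standard distributive ``square'' lemma yields a $2\times 2$ sublattice, and width two forbids a third cover, so these squares cannot branch. This already shows that $L$ looks locally like a ladder strip.

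To assemble a single global ladder I would use the width-two hypothesis to cover $L$ by two chains (Dilworth), take these as the candidate rails, and use linear indecomposability to rule out the strip breaking into independent pieces. The heart of the argument is a level-pairing: I would single out the \emph{rungs} as the covers $x\prec y$ that bridge the two rails, characterized intrinsically by $x$ being join-irreducible and $y$ meet-irreducible across that step together with $[x,y]=\{x,y\}$, show that each element lies in exactly one rung, and then define $C$ as the quotient of $L$ by this pairing, ordered by the induced rail order. It then remains to check that the resulting correspondence is a bijective lattice homomorphism $2\times C \to L$, so that it is the desired isomorphism.

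The main obstacle is the case where $C$ is dense, or otherwise has no covering relations: then $L$ itself has \emph{no} rail covers at all, and every cover is a rung, so no step of the globalization may rely on Hasse-diagram adjacency along the rails. In that regime the two rails and the level-pairing must be defined purely order-theoretically from the two Dilworth chains and from the cross-comparabilities pinned down by distributivity and the absence of doubly reducible elements. I must also handle the degenerate corners — most notably $2\times 2$, whose full symmetry makes the rung/rail distinction ambiguous — so that the constructed map is provably well defined and bijective in every case.
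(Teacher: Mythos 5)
Your easy direction and the local cover dichotomy are fine, but the converse as written is a plan rather than a proof, and the gap sits exactly at the step you defer. Worse, the intrinsic rung test you propose is false as stated, not merely ambiguous in $2 \times 2$: in $2 \times C$ with $C$ having a top element $m$ and some $c \prec m$ in $C$, the \emph{rail} cover $(0,c) \prec (0,m)$ has a join-irreducible bottom (every $(0,\cdot)$ is join-irreducible, by your own observation) and a meet-irreducible top (the only element strictly above $(0,m)$ is $(1,m)$), and $[(0,c),(0,m)] = \{(0,c),(0,m)\}$. So this rail cover passes your test, $(0,c)$ then lies in two ``rungs'' (this one and the genuine rung $(0,c) \prec (1,c)$), and the quotient defining $C$ is not well defined; dually at $(1,\min C)$. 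Two further unfilled steps: nothing in the hypotheses yet guarantees that an abstract $L$ has \emph{any} covering pairs, so ``each element lies in exactly one rung'' presupposes the existence of partners, which must itself be derived; and Dilworth chains are far from canonical (already in $2 \times 2$ one may take $\{(0,0),(0,1),(1,1)\}$ and $\{(1,0)\}$), so ``candidate rails'' cannot be read off a chain cover without the very reconstruction you postpone. Incidentally, your worry is inverted: the dense case is where pairing is \emph{cleanest}, since there every cover is a rung and each element has a unique upper or lower cover; the genuine failure of your test is at endpoint corners, where rail covers mimic rungs.

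The paper avoids all of this by never defining global rails or a rung pairing. It proves the stronger modular statement (Proposition \ref{step 1}): locate a gadget $G_L(a;b,c)$ generated by $b < c$ and an element $a$ incomparable to both; use the $M_3$-$N_5$ theorem to forbid $N_5$ in the modular (in particular distributive) lattice $L$, forcing $G_L(a;b,c) \cong 2 \times 3$; then show, by width-two antichain contradictions and the no-doubly-reducible hypothesis, that any further element $w$ can only attach so that the sublattice generated by the ladder and $w$ is $2 \times 4$; finally absorb all of $L$ by well-ordering $L \backslash G_L(a;b,c)$ and transfinite induction (with linear indecomposability ruling out elements lying above or below the whole ladder). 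Because each step uses only the structure of the finitely generated sublattice built so far, and never covering relations of $L$ itself, density of $C$ and the endpoint ambiguities never arise. If you want to rescue your route, you need a cover-free, order-theoretic definition of the partner map that is provably single-valued at endpoints, together with an existence argument for partners; the incremental gadget argument sidesteps both.
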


We refer the reader to \cite{DSFL} for proofs of these propositions. Proposition \ref{powerful} turns out to be very useful; it has been described in various lattice theory texts, including \cite{FL} and \cite{LTF}. I will make a new proof of proposition \ref{ladder} using the well-known $M_3$-$N_5$ theorem from lattice theory. Through that, we will outline a potential new approach to the open problem of this thesis. \\

In preparation for this new perspective, I will define two structures. This analysis appears to be quite effective for lattices of small (and finite) width. \\

\begin{definition}
	A ladder is a sublattice isomorphic to $2 \times \mathbb{Z}$, $\mathbb{Z}$ being the partial order: $\dots < -2 < -1 < 0 < 1 < 2 < \dots$
\end{definition}
\begin{definition}
	Let $L$ be a lattice and $a,b,c \in L$ be three distinct elements such that $b < c$, $a \parallel b$, and $a \parallel c$. Then a gadget, $G_L(a;b,c)$, is a sublattice of $L$ generated by $a$, $b$, and $c$.
\end{definition}

\begin{remark} \label{gadgets}
	Consider the free lattice on the partial order, $P$, with elements $\{A,B,C\}$ whose only comparability relation is $B < C$. Then the free lattice on $P$, $FL(P)$, is shown below. By the universal property of $FL(P)$, there is a unique lattice homomorphism $\phi: FL(P) \twoheadrightarrow G_L(a;b,c)$ where $\phi(A) = a$, $\phi(B) = b$, and $\phi(C) = c$. Hence, $G_L(a;b,c)$ is isomorphic to a quotient lattice of $FL(P)$. There are six different lattices that $G_L(a;b,c)$ can be isomorphic to representing a total of seven possible cases for $G_L(a;b,c)$. There are two cases that can occur when $G_L(a;b,c) \cong 2 \times 3$ and they are dual to each other.
	\begin{fig}
		\begin{tikzpicture}
		\node (A+C) at (0,3) {$A+C$};
		\node (A+B) at (-1,2) {$A+B$};
		\node (C) at (1,2) {$C$};
		\node (A+B C) at (0,1) {$(A+B)C$};
		\node (A) at (-2.5, 0.5) {$A$};
		\node (AC+B) at (0,0) {$AC+B$};
		\node (AC) at (-1,-1) {$AC$};
		\node (B) at (1,-1) {$B$};
		\node (AB) at (0,-2) {$AB$};
		
		\draw (A+B) -- (A+B C);
		\draw (A+C) -- (A+B) -- (A) -- (AC) -- (AC+B) -- (A+B C) -- (C) -- (A+C);
		\draw (AC) -- (AB) -- (B) -- (AC+B);
		\end{tikzpicture}
	\end{fig}
\end{remark}

\begin{remark} \label{gadgets in ladders}
	Adding elements to a ladder. Let $\{a,c\}$ be an antichain in a ladder, $H$, such that $a \prec a+c$ and $ac \prec c$ in $H$. Assume that $H$ is a sublattice of a lattice $L$, $b \in L \backslash H$, and $ac \prec b \prec c$ in $L$. Then we have $ac \leq ab \leq ac$. So with $ab = ac$ we see that the sublattice of $L$ generated by $H \cup \{b\}$ can be identified with one of three cases. 
	\begin{fig}
		\begin{tikzpicture}
		
		\node (a+b) at (0,0) {$a+c = a+b$};
		\node (c) at (1,-1) {$c$};
		\node (b) at (0,-2) {$b$};
		\node (a) at (-2,-2) {$a$};
		\node (ac) at (-1,-3) {$ac$};
		
		\draw (a+b) -- (a);
		\draw (ac) -- (b) -- (c);
		\draw (a) -- (ac);
		\draw (a+b) -- (c);
		
		\end{tikzpicture}
		\begin{tikzpicture}[scale=1.2]
		
		\node (a+c) at (0,0) {$a+c$};
		\node (a+b) at (-1,-1) {$a+b$};
		\node (c) at (1,-1) {$c$};
		\node (b) at (0,-2) {$b = (a+b)c$};
		\node (a) at (-2,-2) {$a$};
		\node (ac) at (-1,-3) {$ac$};
		
		\draw (a+c) -- (a+b) -- (a);
		\draw (ac) -- (b) -- (c);
		\draw (a) -- (ac);
		\draw (a+b) -- (b);
		\draw (a+c) -- (c);
		
		\end{tikzpicture}
		\begin{tikzpicture}
		
		\node (a+c) at (0,3) {$a+c$};
		\node (a+b) at (-1,2) {$a+b$};
		\node (c) at (1,2) {$c$};
		\node (a+b c) at (0,1) {$(a+b)c$};
		\node (a) at (-3, 0) {$a$};
		\node (b) at (-1,0) {$b$};
		\node (ac) at (-2,-1) {$ac$};
		
		\draw (a+b) -- (a+b c);
		\draw (a+c) -- (a+b) -- (a) -- (ac) -- (b) -- (a+b c) -- (c) -- (a+c);
		\end{tikzpicture}
	\end{fig}
	The third case can be thought of as being the first case with $a' = a$, $b' = b$, $c' = (a+b)c$, and $a'+c' = a+b$. Moreover, these three lattices are the three cases describing what $G_L(a;b,c)$ could be with $a$, $b$, and $c$ as specified in this remark.
\end{remark}

Here, I will make a new proof of an important lemma from Galvin and J\'onsson's paper (\cite{DSFL}, 1959), proposition \ref{ladder}. The new proof, in fact, generalizes to modular lattices and uses a well-known theorem in lattice theory known as the $M_3-N_5$ theorem. We want to both outline techniques that will assist in our investigation of infinite sublattices of free lattices and show the potential utility of such methods by demonstrating how it can be used towards the already known characterization of distributive sublattices of free lattices. We first introduce the $M_3 - N_5$ theorem. \\

The first part of the $M_3 - N_5$ theorem is due to R. Dedekind (1831 - 1916) and the second part is due to G. Birkhoff (1911 - 1996). A direct proof of this theorem is found in \cite{ILO}; as mentioned in \cite{ILO}, a more conceptual proof can be found within G. Gr\"atzer's texts on lattice theory. \\

The theorem involves a generalization of distributive lattices known as \emph{modular lattices}. A \emph{modular} lattice is a lattice where for all $a,b,c \in L$ with $a \leq c$, $a + bc = (a + b)c$. In an arbitrary lattice, this condition weakens to $a + bc \leq (a + b)c$ as $a + bc = ac + bc$ and $(a + b)c = (a + b)(a + c)$. For instance, the pentagon, $N_5$, is a lattice that is not modular. Since $N_5$ is also semidistributive, we see that modular lattices are not the same as semidistributive lattices. We now state the $M_3-N_5$ theorem: \\

\begin{theorem}[The $M_3$-$N_5$ Theorem, see \cite{ILO}]
	A lattice is modular iff it does not have a sublattice isomorphic to $N_5$ and distributive iff it does nor have a sublattice isomorphic to $N_5$ or to $M_3$.
\end{theorem}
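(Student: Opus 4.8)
The plan is to prove both biconditionals by establishing the easy ``identity is inherited by sublattices'' directions directly, and the hard directions by contraposition, exhibiting one of the forbidden sublattices whenever the defining law fails. For the easy directions I would first record that $N_5$ is not modular and $M_3$ is not distributive, each by a single five-element computation. In $N_5$, writing the $3$-chain as $0 < x < y < 1$ with $z$ incomparable to $x$ and $y$, we have $x \leq y$ yet $x + zy = x + 0 = x$ while $(x+z)y = 1\cdot y = y$, so the modular law fails. In $M_3$, with atoms $x,y,z$ we get $x(y+z) = x\cdot 1 = x$ but $xy + xz = 0 + 0 = 0$, so the distributive law fails. Since a sublattice computes $+$ and juxtaposition exactly as in the ambient lattice, any $L$ containing a copy of $N_5$ (resp. $M_3$) must itself violate modularity (resp. distributivity). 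As distributive lattices are modular, this yields ``modular $\Rightarrow$ no $N_5$'' and ``distributive $\Rightarrow$ no $N_5$ and no $M_3$''.

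For the hard direction of the modular statement I would argue contrapositively: assume $L$ is not modular, so there are $a \leq c$ and $b$ with $a + bc < (a+b)c$, the reverse inequality $a + bc \leq (a+b)c$ being automatic. Set $\alpha = a + bc$ and $\beta = (a+b)c$, so that $bc \leq \alpha < \beta \leq a+b$, with both $\alpha, \beta \leq c$. Using $b \leq a+b$ together with $\alpha,\beta \leq c$, one checks $\alpha b = \beta b = bc$ and $\alpha + b = a+b$, and then, since $\alpha \leq \beta$, also $\beta + b = a+b$. I would then show that the \emph{strict} inequality $\alpha < \beta$ forces $b$ to be incomparable to both $\alpha$ and $\beta$ and forces $bc < \alpha$ and $\beta < a+b$, so that $\{bc,\alpha,\beta,a+b,b\}$ is a sublattice isomorphic to $N_5$.

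For the hard direction of the distributive statement, suppose $L$ contains neither $N_5$ nor $M_3$. The modular part just proved gives that $L$ is modular, so it remains to show that a modular, non-distributive lattice contains $M_3$. Here I would invoke the median characterisation of distributivity: a lattice is distributive iff $ab + bc + ca = (a+b)(b+c)(c+a)$ for all $a,b,c$. So I fix a triple with $m := ab+bc+ca \;<\; M := (a+b)(b+c)(c+a)$ (one always has $m \leq M$), and set $x = (a+m)M$, $y = (b+m)M$, $z = (c+m)M$. Applying the modular law repeatedly (noting $m \leq M$ lets one slide $m$ through the meet with $M$), I would verify $xy = yz = zx = m$ and $x+y = y+z = z+x = M$, and that $x,y,z$ are pairwise incomparable precisely because $m < M$; then $\{m,x,y,z,M\} \cong M_3$, completing the proof.

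The genuinely delicate step will be the $M_3$ construction. The simplifications $x + y = M$ and $xy = m$ are exactly where modularity is indispensable — in a general lattice they break down — and turning the single hypothesis $m < M$ into the pairwise incomparability of $x,y,z$ (rather than a collapse into a chain) is what actually produces $M_3$. By contrast, the $N_5$ construction is mostly bookkeeping once $\alpha$ and $\beta$ are isolated; its only subtlety is excluding the degenerate coincidences among the five candidate elements, which I expect to extract cleanly from the lone strict inequality $\alpha < \beta$.
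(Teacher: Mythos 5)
Your proposal is correct, but the salient point of comparison is that the paper contains no proof of this theorem at all: it states the $M_3$--$N_5$ theorem as classical background, attributing the two halves to Dedekind and Birkhoff, and explicitly defers to \cite{ILO} (and to Gr\"atzer's texts) for the proof. What you have written is essentially the standard argument from that cited source: the easy directions via the two five-element computations plus the fact that sublattices inherit identities, the $N_5$ half via $\alpha = a + bc$ and $\beta = (a+b)c$ with the collapse cases ($bc = \alpha$, $\beta = a+b$, or $b$ comparable to $\alpha$ or $\beta$) all correctly ruled out by $\alpha < \beta$, and the $M_3$ half via the textbook elements $x = (a+m)M$, $y = (b+m)M$, $z = (c+m)M$. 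The one spot where you substitute a citation for an argument is the median characterisation of distributivity, and strictly you only need its hard direction (not distributive $\Rightarrow$ some triple has $m < M$); since at that stage you already have modularity, this is a short self-contained computation you could include: if distributivity fails at $(a,b,c)$ but $m = M$ for that triple, then $a(b+c) \leq (a+b)(b+c)(c+a) = m$, so $a(b+c) \leq am = a\bigl((ab + ca) + bc\bigr) = (ab + ca) + a(bc) = ab + ca$ by the modular law (using $ab + ca \leq a$), contradicting the assumed failure; hence $m < M$, and no appeal to the general median lemma is needed.
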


A lattice $K$ is forbidden in $L$, that $K$ is a \emph{forbidden sublattice} of $L$, if $L$ does not have a sublattice isomorphic to $K$. The $M_3-N_5$ theorem is an example of a forbidden sublattice characterization. We note that the diamond, $M_3$, is modular but is not a semidistributive lattice. \emph{Whence, a lattice is distributive if and only if it is both modular and semidistributive. In particular, a sublattice of a free lattice is modular if and only if it is distributive.} We now move to proving proposition \ref{ladder}. \\

My alternative proof shown below uses a characterization that modular lattices (generalizations of distributive lattices) are precisely those lattices which do not have any sublattice isomorphic to $N_5$ ($N_5$ is forbidden); this being a consequence of the classical $M_3-N_5$ theorem (see \cite{ILO}). It should be noted that it makes explicit use of ZFC set theory, particularly the axiom of choice presented as the well ordering theorem. \\

Lemma $3$ of Galvin and J\'onsson's paper \cite{DSFL}, proposition \ref{ladder}, is the distributive case of the below assertion: \\

\begin{proposition}[Also see: \cite{DSFL} Lemma 3 from  Galvin and J\'onsson] \label{step 1}
	
	A linearly indecomposable modular lattice of width two that has no doubly reducible elements is isomorphic to the direct product of two chains, one of them being the chain with two elements.
	
\end{proposition}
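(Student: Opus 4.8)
The plan is to first collapse the modular hypothesis to distributivity, and then to build $L$ up as a transfinite union of ladders, using the gadget analyses of Remarks~\ref{gadgets} and~\ref{gadgets in ladders} to control each step. First I would observe that the width and modularity hypotheses together force distributivity: any sublattice isomorphic to $M_3$ contains a three-element antichain, which is impossible in a lattice of width two, and since $L$ is modular the $M_3$--$N_5$ theorem forbids any sublattice isomorphic to $N_5$. Hence $L$ contains neither $M_3$ nor $N_5$, so $L$ is distributive. This is also the sense in which the present proposition is essentially a restatement of Proposition~\ref{ladder}: at width two, modular and distributive coincide. From here on I may use distributivity freely, together with the fact that $L$, being linearly indecomposable of width two, is not a chain and therefore contains an incomparable pair.

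Next I would pin down the local structure. Given an incomparable pair $a \parallel c$, distributivity makes $\{ac, a, c, a+c\}$ a copy of $2 \times 2$, the basic cell of a ladder. The role of the hypothesis that no element is doubly reducible is to keep these cells \emph{thin}: because the width is two and no element is simultaneously join- and meet-reducible, I would argue that the sublattice generated by such a pair together with a third element lying between $ac$ and $a+c$ must be one of the finitely many configurations enumerated in Remark~\ref{gadgets}, and in fact one of the three ladder-compatible cases of Remark~\ref{gadgets in ladders}. This rules out branching and identifies the incomparable pairs of $L$ with the rungs of a would-be ladder.

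I would then invoke the well-ordering theorem to list $L = \{x_\alpha : \alpha < \kappa\}$ and construct, by transfinite recursion, an increasing family of sublattices $H_\alpha$ with $H_\alpha \cong 2 \times C_\alpha$ for chains $C_\alpha$. Starting from a single cell, at a successor stage I adjoin $x_\alpha$ and close under meet and join; Remark~\ref{gadgets in ladders} shows that adjoining one element to a ladder (subdividing a rail, or extending it) again yields a ladder, so the isomorphism type $2 \times C_{\alpha+1}$ is preserved and neither a doubly reducible element nor a third incomparable element is created. At limit stages I take unions, using that a directed union of ladders is again a ladder. Linear indecomposability guarantees that the accumulated structure does not break as a nontrivial linear sum, so the chains $C_\alpha$ amalgamate into a single chain $C$ rather than a stack of separate pieces, and taking $\alpha = \kappa$ gives $L = \bigcup_{\alpha < \kappa} H_\alpha \cong 2 \times C$.

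The main obstacle I anticipate is the successor step of the recursion: when $x_\alpha$ is adjoined, its meets and joins with the elements already present must all be shown to stay within a width-two ladder, without ever producing a doubly reducible element or a three-element antichain. Making this precise — in particular verifying that the three local cases of Remark~\ref{gadgets in ladders} are genuinely exhaustive and that they propagate globally — is where the distributive law, the width bound, and the no-doubly-reducible hypothesis must be combined most carefully. A secondary subtlety is reconciling the arbitrary well-order on $L$ with the order-theoretic position of each new element, since $x_\alpha$ may need to be inserted in the interior of an already-constructed rail rather than appended at one end.
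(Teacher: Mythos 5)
Your opening move is correct and is the one genuine departure from the paper: since the three atoms of $M_3$ form a three-element antichain, width two already excludes $M_3$, and modularity excludes $N_5$ by the $M_3$--$N_5$ theorem, so $L$ is distributive and the modular statement collapses to Galvin and J\'onsson's Lemma 3. The paper never makes this reduction; it instead applies the $M_3$--$N_5$ theorem directly to classify the gadget $G_L(a;b,c)$ as $2 \times 3$ (ruling out the $N_5$ case of Remark \ref{gadgets}) and works with modularity throughout. Apart from that, your skeleton --- existence of a gadget, local control via Remarks \ref{gadgets} and \ref{gadgets in ladders}, then a well-ordering and transfinite recursion building $2 \times C$ --- is exactly the paper's architecture.

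The problem is that what you defer as ``the main obstacle'' is not a residual technicality: the successor-step case analysis \emph{is} the proof, and your proposal contains no argument for it. The paper disposes of each possible position of a new element $w$ relative to the current ladder separately: interior positions such as $z < w < u$, $x < w < s$, $y < w < t$ are each killed by forming an auxiliary gadget (e.g.\ $G_L(t;z,w) \cong 2 \times 3$ yields $y < tw < t$ and hence the forbidden antichain $\{s,tw,z\}$); positions incomparable to two ladder elements give three-element antichains outright; and the one case width cannot touch --- $w < y$ with $w \parallel x$ --- is excluded only because the gadget $G_L(s;w,y)$ must then be the right-hand lattice of figure \ref{three gadgets}, forcing $y = x + w$, so that $y = tz$ is doubly reducible, a contradiction. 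Your proposal never isolates this last case, and it is the only point where the no-doubly-reducible hypothesis does work beyond keeping the ladder's endpoints irreducible; asserting that Remark \ref{gadgets in ladders} ``rules out branching'' does not cover it, since that remark only treats an element with $ac \prec b \prec c$. Two further mismatches: your invariant $H_\alpha \cong 2 \times C_\alpha$ fails as stated, because an element lying strictly above (or below) the entire current ladder generates with it a ladder plus a top (or bottom), not a ladder --- the paper handles bounds separately, using linear indecomposability together with the join-irreducibility of $x$ and meet-irreducibility of $u$ to show that not every outside element can be a bound of the gadget, whereas you invoke linear indecomposability only at the limit stage to ``amalgamate chains,'' which does not substitute for this (though, in fairness, the paper's own closing transfinite induction is also stated only in outline). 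And your recursion starts from a single $2 \times 2$ cell, while the paper starts from a gadget $\cong 2 \times 3$ precisely because its antichain contradictions need three rungs; the four-element degenerate case is handled separately there via linear indecomposability.
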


\begin{proof}
	If the lattice is a sublattice of the direct product of two chains each having two elements then the above is trivially true. Conversely, assume that $L$ has more than four elements. The case when $|L| \leq 4$ can be seen to make $L \cong 2 \times 2$ \\
	
	Looking at the other possibilities, assume that there is a gadget $G_L(a;b,c)$ in $L$. To justify this step, suppose that $L$ does not have any gadgets. Then since $L$ is of width two, one can find $a \parallel b$ in $L$ where $x \in L \backslash \{a, b, ab, a+b \}$ implies that $x \geq a + b$ or $x \leq ab$. Then, by the linear indecomposability of $L$ with the facts that $ab$ is join irreducible and $a+b$ is meet irreducible, $L = \{a, b, ab, a+b \}$. \\
	
	Considering remark \ref{gadgets}, we note that since $L$ is modular, the $M_3-N_5$ theorem indicates that $N_5$ is a forbidden sublattice of $L$. Hence, $G_L(a;b,c)$ is the middle or right lattice below.
	
	\begin{fig} \label{three gadgets}
		
		\begin{tikzpicture}
		
		\node (u) at (0,0) {$u$};
		\node (t) at (-1,-1) {$t$};
		\node (z) at (1,-1) {$z$};
		\node (y) at (0,-2) {$y$};
		\node (s) at (-2,-2) {$s$};
		\node (x) at (-1,-3) {$x$};
		
		\draw (u) -- (t) -- (s);
		\draw (x) -- (y) -- (z);
		\draw (s) -- (x);
		\draw (t) -- (y);
		\draw (u) -- (z);
		
		\end{tikzpicture}
		\begin{tikzpicture}
		
		\node (a+c) at (0,0) {$a+c$};
		\node (a+b) at (-1,-1) {$a+b$};
		\node (c) at (1,-1) {$c$};
		\node (b) at (0,-2) {$b$};
		\node (a) at (-2,-2) {$a$};
		\node (ab) at (-1,-3) {$ab$};
		
		\draw (a+c) -- (a+b) -- (a);
		\draw (ab) -- (b) -- (c);
		\draw (a) -- (ab);
		\draw (a+b) -- (b);
		\draw (a+c) -- (c);
		
		\end{tikzpicture}
		\begin{tikzpicture}
		
		\node (a+c) at (0,0) {$a+c$};
		\node (a) at (-1,-1) {$a$};
		\node (c) at (1,-1) {$c$};
		\node (ac) at (0,-2) {$ac$};
		\node (b) at (2,-2) {$b$};
		\node (ab) at (1,-3) {$ab$};
		
		\draw (a+c) -- (c) -- (b);
		\draw (ab) -- (ac) -- (a);
		\draw (b) -- (ab);
		\draw (c) -- (ac);
		\draw (a+c) -- (a);
		
		\end{tikzpicture}
		
	\end{fig}
	
	Denote the elements of $G_L(a;b,c) \cong 2 \times 3$ using the left diagram in the above figure. We will show that $L \cong 2 \times C$ for some chain $C$ by constructing this direct product starting with $G_L(a;b,c)$. \\
	
	Firstly, let $w \in L \backslash G_L(a;b,c)$ and suppose that $z < w < u$. We see that $t \parallel w$ so consider the gadget $G_L(t;z,w)$. Since $L$ is modular, $G_L(t;z,w) \cong 2 \times 3$ and we have $y < tw < t$. But then $\{s, tw, z\}$ is an antichain contrary to assumption. Very similar arguments show that $x < w < s$ is impossible. And if $y < w < t$ then $\{s,w,z\}$ is an antichain, this being impossible. \\
	
	Because $L$ has no doubly reducible elements, $x$ is join irreducible and $u$ is meet irreducible. Hence if $w$ is an upper bound or a lower bound of $G_L(a;b,c)$ for all $w \in L \backslash G_L(a;b,c)$ then $L$ would not be linearly indecomposable, contrary to assumption. As $L$ has joins and meets, this implies that for all $w \in G_L(a;b,c)$, $w \nparallel w'$ for some $w' \in G_L(a;b,c)$. \\
	
	Without loss of generality, suppose that there is a $w \in L \backslash G_L(a;b,c)$ with $y < w$, $y \parallel t$ and $y \parallel z$. Then $\{w,t,z\}$ is an antichain in $L$, contrary to the assumption that the width of $L$ is two. \\
	
	Without loss of generality, suppose that there is a $w \in L \backslash G_L(a;b,c)$ with $w < z$, $w \parallel y$, and $w \parallel s$. Then $\{w,s,y\}$ is an antichain in $L$, contrary to assumption once again. \\
	
	Lastly, assume without loss of generality that there is a $w \in L \backslash G_L(a;b,c)$ such that $w < y$ and $w \parallel x$. Then as $s \parallel w$, consider the gadget $G_L(s;w,y)$. Because $w \parallel x$, this gadget is like the right diagram of figure \ref{three gadgets} with $a = s$, $b = w$, and $c = y$. Hence, as $x = sy$, $y = x + w$. But then $y = tz$ is doubly reducible, contrary to assumption. \\
	
	Hence, the sublattice generated by $G_L(a;b,c) \cup \{w\}$ is isomorphic to $2 \times 4$. By ZFC, we can use the well-ordering theorem to find a well-ordering of the elements $L\backslash G_L(a;b,c) = \{p_i : i \in \alpha\}$ for some ordinal $\alpha$. With this ordering, transfinite induction can be used. With $w$ above being $p_0$, apply the above argument when adding $p_i$ to $G_L(a;b,c) \cup \{p_j : j < i\}$; this works when $i$ is a successor or when $i$ is a limit ordinal.
\end{proof}

One advantage of this approach is that it leads to a generalization described later in this article. It starts with the below proposition, which is a variant of proposition \ref{step 1}. \\

With this proof technique I will make the following conjecture. It asserts that a certain class of lattices can be completely determined using an explicit construction. I suspect that the above arguments can be slightly modified to prove it, and that it is not particularly hard to prove.

\begin{conjecture} \label{conjecture 1}
	Let $L$ be a countable lattice of width two that satisfies Whitman's condition. Then $L$ is semidistributive and $L = \cup_{i \in N} P_i$. Here, $N$ is a chain that is order isomorphic to $\omega$, the dual of $\omega$, or $\dots < -2 < -1 < 0 < 1 < 2 < \dots$. And here, $P_i$ is the disjoint union of two chains $A$ and $B$ and $\{0_i,1_i\}$ where for all $a \in A$ and $b \in B$ $a + b = 1_i$ and $ab = 0_i$. Moreover, $P_i \cap P_j = \{0_j, 1_i\}$ if $i \prec j$ in $N$ and $P_i \cap P_j = \varnothing$ otherwise.
\end{conjecture}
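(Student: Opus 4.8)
The plan is to follow the architecture of the proof of Proposition \ref{step 1}, but to replace every appeal to modularity (``$N_5$ is forbidden'') by Whitman's condition, and to split the argument into two stages: the semidistributivity claim, which I expect to be essentially free, and the structural decomposition, which carries all of the content. First I would dispose of semidistributivity, observing that it is forced by width two alone. Suppose $SD_{+}$ fails, so that $a+b=a+c$ while $a+bc<a+b$. A short case check shows that any comparability among $a,b,c$ already yields $a+bc=a+b$: if $b\le c$ then $bc=b$ and $a+bc=a+b$; if $a\le b$ then $a+b=b=a+c$ gives $c\le b$, reducing to the previous case; if $b\le a$ then $a+b=a=a+c$ gives $c\le a$, whence $bc\le a$ and $a+bc=a=a+b$; the remaining subcases are symmetric. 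Hence a genuine failure requires $a,b,c$ pairwise incomparable, i.e. a three element antichain, which is impossible when the width is two. The dual argument handles $SD_{\cdot}$, so $L$ is semidistributive. Since $L$ also satisfies Whitman's condition it has no doubly reducible elements, so every element is join irreducible or meet irreducible; this is the hypothesis I would carry into the second stage.

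For the decomposition I would begin from Dilworth's theorem, writing the width two lattice as a union of two chains $L=C_{1}\cup C_{2}$, and read the blocks $P_{i}$ off from the interaction of $C_{1}$ and $C_{2}$. Concretely I would run the transfinite growth argument of Proposition \ref{step 1}: well order $L$ and add elements one at a time, at each step forming the gadget $G_{L}(a;b,c)$ on the newly considered element together with a suitable incomparable pair already present. By Remarks \ref{gadgets} and \ref{gadgets in ladders}, and because $M_{3}$ and $2\times2\times2$ have width three and are therefore already excluded, each such gadget is one of the three width two shapes of Figure \ref{three gadgets}; the one structural difference from Proposition \ref{step 1} is that now the middle shape $N_{5}$ is \emph{permitted}, and it is exactly these $N_{5}$ gadgets that accumulate into the blocks. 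I would then show that over a maximal run in which every cross join equals one common element and every cross meet equals one common element, the two chains $A\subseteq C_{1}$ and $B\subseteq C_{2}$, together with that top $1_{i}$ and bottom $0_{i}$, form a copy of $P_{i}$, and that the places where the chains resynchronise produce the shared pairs $\{0_{j},1_{i}\}$ along which consecutive blocks overlap, precisely as two points of a rung. The absence of doubly reducible elements is what rules out the degenerate attachments (an upper or lower bound that is neither a new rung nor a cut point), exactly as in the final paragraphs of the proof of Proposition \ref{step 1}; the ladder $2\times\mathbb{Z}$ is the model example, decomposing into diamond blocks overlapping two at a time and giving $N\cong\mathbb{Z}$.

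The remaining task is to assemble the blocks into the index chain $N$, and \emph{this is the step I expect to be the genuine obstacle}. That the blocks are linearly ordered and countably many follows from width two together with the countability of $L$ (cf. Proposition \ref{powerful}), but pinning the order type of $N$ down to exactly $\omega$, $\omega^{*}$, or $\mathbb{Z}$ asserts that the blocks are discretely ordered with at most one end, and I suspect the conjecture is most fragile here. The difficulty is caused by chain segments: a lattice such as $\mathbb{Q}\oplus(2\times2)\oplus\mathbb{Q}$ (ordinal sum) still has width two and, being distributive with no doubly reducible element, satisfies Whitman's condition by the discussion following Theorem \ref{the restriction}, yet its dense chain parts contain no antichain, force only degenerate blocks, and cannot be covered by blocks overlapping in two points and indexed by $\omega$, $\omega^{*}$, or $\mathbb{Z}$. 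A complete proof will therefore need either an additional hypothesis forbidding such dense chain segments (for example, that every proper interval is finite, as flagged in the abstract), or a weakening of the conclusion that lets $N$ be an arbitrary countable chain, with $\omega$, $\omega^{*}$, and $\mathbb{Z}$ appearing as the representative cases in which $L$ has a least block and no greatest, a greatest and no least, or neither. Settling which of these is the right formulation, and showing that under it the block index is forced to be discrete, is where I would concentrate the real effort.
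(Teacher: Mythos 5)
There is an important mismatch of expectations here: the paper contains \emph{no proof} of this statement. It is posed as Conjecture \ref{conjecture 1}, accompanied only by the author's remark that the arguments proving Proposition \ref{step 1} can probably be ``slightly modified'' to establish it. So your attempt cannot be measured against a paper proof and must be judged on its own. On its own terms, your first stage is correct and complete: the case analysis showing that a failure of $(SD_\vee)$ with $a+b=a+c$ but $a+bc<a+b$ forces $a,b,c$ to be pairwise incomparable is exhaustive, so width two alone yields both semidistributive laws, and you have in fact proved the conjecture's semidistributivity claim in stronger form (neither countability nor Whitman's condition is needed). Your second stage, by contrast, is a strategy outline rather than a proof: the central assertion --- that maximal runs of constant cross joins and cross meets assemble into blocks $P_i$ overlapping consecutively in exactly $\{0_j,1_i\}$ --- is exactly the bookkeeping that Proposition \ref{step 1} accomplished by using modularity to forbid the $N_5$ gadget, and with $N_5$ now permitted none of that work has actually been redone. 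That is the gap in the positive direction.

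Your third stage, however, deserves to be promoted from ``anticipated difficulty'' to ``refutation'': under the intended reading of the conjecture (blocks with both chains $A$ and $B$ nonempty, as in the model $2\times\mathbb{Z}$), the statement as written is false, and your example shows it. In $\mathbb{Q}\oplus(2\times2)\oplus\mathbb{Q}$ the only incomparable pair is the pair of atoms of the diamond, so every nondegenerate block must \emph{be} that diamond, and no union of blocks indexed by $\omega$, $\omega^*$, or $\mathbb{Z}$ can cover the dense chain segments; as you note, this lattice is distributive with no doubly reducible elements, hence satisfies $(W)$ by the discussion after Theorem \ref{the restriction}, and is even a sublattice of a free lattice by Theorem \ref{the characterization}. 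A sharper counterexample sits inside the paper's own classification: in $2\times\mathbb{Q}$ the constancy of cross joins and cross meets forces $|A|=|B|=1$, so every nondegenerate block is a four-element diamond, and a discretely indexed chain of diamonds glued along two-element rungs meets the two rails in only a discrete set of levels. Finite width-two lattices such as $2\times 2$ or $N_5$ also satisfy the hypotheses yet cannot be unions over an infinite $N$ of nonempty blocks that are pairwise disjoint off consecutive overlaps. (If one instead reads the statement fully literally, allowing $B=\varnothing$ and $0_j=1_i$, the blocks degenerate to unconstrained chains and the conclusion loses its content, so the conjecture needs reformulation either way.) Your proposed repair --- adding the hypothesis that every proper interval is finite, which is precisely the standing assumption of the abstract and of Section 3, or else allowing more general countable index chains $N$ --- is the right conclusion, and a final writeup should state the counterexample as a disproof of the literal statement and then prove the repaired version.
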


J\'onsson's conjecture, proven by J.B. Nation in 1980, states that a finite lattice is a sublattice of a free lattice if and only if it is semidistributive and satisfies Whitman's condition. That conjecture is very deep. Below, I will make a similar conjecture which I suspect is a lot easier to prove. \\

\begin{conjecture} \label{conjecture 2}
	A width-two lattice is a sublattice of a free lattice if and only if it is semidistributive and satisfies Whitman's condition.
\end{conjecture}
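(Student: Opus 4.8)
The plan is to prove the two implications separately, with the forward (necessity) direction routine and the reverse (sufficiency) direction carrying all the content. For necessity, suppose $L$ is a sublattice of a free lattice $F$. By Whitman's theorem $F$ satisfies Whitman's condition, and $F$ is semidistributive; both are implications between finite lattice-polynomial inequalities, and since every join and meet in $L$ is computed exactly as it is in $F$, the two conditions descend to the sublattice $L$. Hence $L$ is semidistributive and satisfies Whitman's condition, with no hypothesis on the width needed. This is the easy half.

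For sufficiency, let $L$ have width two, be semidistributive, and satisfy (W). The plan is to (i) establish that $L$ is countable, (ii) invoke Conjecture \ref{conjecture 1} to obtain the block decomposition $L = \cup_{i \in N} P_i$, and (iii) construct an embedding into a free lattice. Step (ii) is available for free once (i) is in hand: Conjecture \ref{conjecture 1} applies to any countable width-two lattice satisfying (W) and delivers exactly the form in which each $P_i$ is a pair of chains $A$, $B$ between a common bottom $0_i$ and top $1_i$ with $a + b = 1_i$ and $ab = 0_i$, glued along shared endpoints into a chain $N$ of type $\omega$, $\omega^\ast$, or $\mathbb{Z}$. Underlying this is the gadget analysis that generalizes Proposition \ref{step 1}: (W) forbids doubly reducible elements, semidistributivity forbids a sublattice isomorphic to $M_3$ (since $M_3$ is not semidistributive), and width two forbids three-element antichains. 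The one change from the modular argument of Proposition \ref{step 1} is that $N_5$ is now permitted, so the admissible gadgets of Remark \ref{gadgets} are cut down to precisely the blocks $P_i$ rather than only $2 \times 3$.

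For step (iii) I would build an embedding $\varphi \colon L \to FL(X)$ into the free lattice on a countable set $X$ incrementally. A distributive block $2 \times C$ with $C$ countable is already a sublattice of a free lattice by Theorem \ref{the characterization}, and I would realize the general, possibly non-modular, blocks and glue consecutive images along the shared pair $\{0_j, 1_i\}$ in the same fashion, extending $\varphi$ one generator at a time. At each stage (W) lets one evaluate the relevant joins and meets in $FL(X)$ and confirm that $\varphi$ creates no collapse and no spurious comparability, while semidistributivity guarantees that the local shape of each block is preserved. This incremental verification runs parallel to the $p_i$-induction already carried out in the proof of Proposition \ref{step 1}.

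The hard part will be step (i), controlling cardinality. Width two gives, by Dilworth's theorem, a partition of $L$ into two chains, so $L$ is countable exactly when its maximal chains are; and by Proposition \ref{powerful} no free lattice contains an uncountable chain, so the embedding of step (iii) can succeed only if every chain of $L$ is countable. The delicate point is that countability is precisely what the necessity direction supplies automatically but is not obviously forced by (W) and semidistributivity alone — the distributive ladder $2 \times C$ is semidistributive, satisfies (W), and has width two for \emph{every} chain $C$. The decisive step is therefore to prove that in a width-two semidistributive lattice satisfying (W) no maximal chain can be uncountable, which I would attempt by showing that an uncountable chain inside such a lattice forces a configuration violating (W), semidistributivity, or width two. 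I expect this cardinality step, rather than the gadget analysis or the incremental embedding, to be the crux of the argument, and it is the point at which this sketch must be completed.
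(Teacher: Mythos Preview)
The paper does not prove this statement: it is explicitly labelled a \emph{conjecture}, introduced with ``I will make a similar conjecture which I suspect is a lot easier to prove,'' and left open. There is therefore no proof in the paper to compare your proposal against.

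As for the proposal itself, there is a fatal gap, and you have in fact already located it without recognising it. You observe that $2 \times C$ is width-two, semidistributive, and satisfies (W) for \emph{every} chain $C$. Now take $C$ uncountable. Then $\{0\} \times C$ is an uncountable chain inside $2 \times C$, and by Proposition~\ref{powerful} no free lattice contains an uncountable chain; hence $2 \times C$ is not a sublattice of any free lattice. This is a direct counterexample to the conjecture as literally stated. Your proposed ``decisive step''---to prove that a width-two semidistributive lattice with (W) can contain no uncountable chain---is therefore not merely hard but false. The conjecture can only survive if a countability hypothesis is added, as the author does explicitly in Conjecture~\ref{conjecture 1} but not here.

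Even granting that repair, two further gaps remain. Your step~(ii) invokes Conjecture~\ref{conjecture 1}, which the paper also leaves unproven; you cannot cite it as available input. And your step~(iii), the actual embedding into a free lattice, is only a gesture: for J\'onsson's original finite conjecture this was precisely the hard direction, settled only by Nation's 1980 argument, and ``extending $\varphi$ one generator at a time'' together with ``(W) lets one evaluate the relevant joins and meets'' does not begin to address the combinatorics required, even in width two.
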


\section{A possible classification}

In past research on sublattices of free lattices, a certain condition on a lattice introduced by B. J\'onsson had played an important role. The condition itself is explained in \cite{VL} and in \cite{FL}. With it, I will propose a possible way of looking at countable sublattices of free lattices at the end of this article. \\

We note that J\'onsson's condition ($L = D(L) = D^d(L)$) is a stronger form of the semidistributive laws. So now we describe the following definition from B. J\'onsson.

\begin{definition}[see \cite{VL} and \cite{FL}]
	Let $L$ be a lattice. Set $D_0(L)$ to be the set of join prime elements of $L$. Given $k \in \omega$, let $D_{k+1}(L)$ denote the set of elements $x$ with the property that every nontrivial join cover $X$ of $x$ refines to some join cover $X' \ll X$ of $x$ where $X' \subseteq D_k(L)$. Lastly, take $D(L) = \cup_{k \in \omega} D_k(L)$. Dually, define a sequence, starting with the meet prime elements and using dual join covers, $D_0^d(L), D_1^d(L), D_2^d(L), \dots$ with $D^d(L) = \cup_{k \in \omega} D_k^d(L)$.
\end{definition}

The following will be noted: \\

(1) The distributive sublattices, $L$, of free lattices which satisfy $L = D(L)$ and $L = D^d(L)$ are linear sums of chains, eight element boolean algebras, or a direct product $2 \times C$ \emph{where $C$ has both a least and a greatest element}. \\

(2) The distributive sublattices, $L$, of free lattices which satisfy $L = D(L)$ and $L \neq D^d(L)$ are as in (1) but also have, as a sublattice, a direct product $2 \times C$ \emph{where $C$ has a least and but not a greatest element}. \\

(3) The distributive sublattices, $L$, of free lattices which satisfy $L = D(L)$ and $L \neq D^d(L)$ are as in (1) but also have, as a sublattice, a direct product $2 \times C$ \emph{where $C$ has a greatest but not a least element}. \\

(4) The distributive sublattices, $L$, of free lattices which satisfy $L = D(L)$ and $L \neq D^d(L)$ are as in (1), (2), or (3) but also have, as a sublattice, a direct product $2 \times C$ \emph{where $C$ does not have a least element and does not have a greatest element}. \\

Using these observations I will make this classification, a partial order ordered by set inclusion. We will compare this with a similar diagram at the end of this article.

\begin{fig} \label{classification 1}
	\begin{center}
		\begin{tikzpicture}[scale = 3]
		\node (TDC) at (0,0) {$The$ $Distributive$ $Case$};
		\node (10) at (-2,-1) {$L = D(L), L \neq D^d(L)$};
		\node (01) at (0,-1) {$L \neq D(L), L = D^d(L)$};
		\node (11) at (-1,-2) {$L = D(L), L = D^d(L)$};
		\node (00) at (2,-1) {$L \neq D(L), L \neq D^d(L)$};
		
		\draw (TDC) -- (10) -- (11) -- (01) -- (TDC) -- (00);
		\end{tikzpicture}
	\end{center}
\end{fig}

\section{On semidistributive lattices satisfying Whitman's condition}

A very conservative extension of a part of Galvin and J\'onsson's 1959-1961 characterization of distributive sublattices of free lattices is derived. In the process, a nontrivial property of infinite semidistributive lattices satisfying Whitman's condition is obtained, assuming that every subinterval is finite. To the best of my knowledge, this small result I made is new and not noted elsewhere in the literature. With this, we complete our perspective on countable sublattices of free lattices using the semidistributive laws. \\

In this section, assume that $\mathbb{Z}$ has this order relation $\dots < -2 < -1 < 0 < 1 < 2 < \dots$ \\

The below results apply to the sublattices, $L$, of free lattices where every interval in $L$ is finite. Comparing with sharply transferable lattices, we note that every interval in a sharply transferable lattice, as described in the previous section, is finite. However, a sharply transferable lattice cannot have a spanning cover, which will be described later, so the result of this section does not directly apply to those lattices. This section is a (very) conservative extension of Galvin and J\'onsson's paper, \cite{DSFL}. It completes an outline of a possible future classification of countable sublattices of free lattices. \\

I will call a covering pair $a \prec b$ in $L$ a \emph{spanning cover} of $L$ if there are two sequences, $(a_i)$ and $(b_j)$, where $a < a_1 < a_2 < a_3 < \dots$ has no upper bound in $L$, $b > b_1 > b_2 > b_3 > \dots$ has no lower bound in $L$, $b \parallel a_k$ for all $k$, and $a \parallel b_k$ for all $k$. \\

Roughly speaking a sublattice, $L$, of a free lattice in which every interval is finite has the property that any spanning cover ``splits'' $L$ into two mutually disjoint pieces much like how $2 \times \mathbb{Z}$ ``splits'' into $\{(0,k) : k \in \mathbb{Z} \}$ and $\{(1,k) : k \in \mathbb{Z} \}$. This is possible because of the semi distributive laws. The word ``splitting'' is not the same as the term splitting used by R. Mackenzie in \cite{EBNMV}.

\begin{theorem} \label{spanning cover}
	Let $L$ be a countably infinite semidistributive lattice satisfying Whitman's condition. Assume that every interval in $L$ is finite. Then if $L$ has a spanning cover, $a \prec b$, there exists a partition, $L = A \cup B$, of $L$ with $a \in A$ and $b \in B$ with the following property. There is a sublattice, $H = \{(i,k) : i \in 2, j \in \mathbb{Z} \} \cong 2 \times \mathbb{Z}$ where $\{(0,k) \in H : k \in \mathbb{Z}\} \subseteq A$, $\{(1,k) \in H : k \in \mathbb{Z}\} \subseteq B$, $a = (0,0)$, and $b = (1,0)$. Moreover $B$ satisfies, and $A$ satisfies the dual of, the following:
	
	(1) For all $x \in B \backslash H$ and $k \in \mathbb{Z}$, $x \geq (0,k)$ implies $x \geq (1,k)$.
	
	(2) For all $x \in B \backslash H$, let $H_x$ be the sublattice of $L$ generated by $H \cup \{x\}$. Then $H_x \backslash H$ is finite.
	
	In particular, $A$ and $B$ are sublattices of $L$. One could call a partition $\{A,B\}$ of $L$ as described above a ladder splitting of $L$.
\end{theorem}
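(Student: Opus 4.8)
The plan is to first manufacture the ladder $H$ from the spanning cover $a \prec b$, then to cut $L$ in two by a single condition measuring how each element meets and joins with $a$ and $b$, and finally to read off properties (1) and (2) and the closure of $A$ and $B$. To build $H$, set $(0,0)=a$ and $(1,0)=b$. To climb one rung I would use the ascending sequence: since $a<a_1$ and $a_1\parallel b$ one checks $a_1 b = a$ (a strict element of $[a,b]=\{a,b\}$ forces it), and because every interval is finite the interval $[a,a_1]$ contains an element $(0,1)$ with $a\prec(0,1)\le a_1$; the covering $a\prec b$ then forces $(0,1)\parallel b$ and $(0,1)\,b=a$, and I set $(1,1)=(0,1)+b$. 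Descending one rung is dual, using the sequence $(b_j)$. Iterating in both directions and verifying by induction that the sublattice generated by $\{(0,j),(1,j):|j|\le n\}$ is $\cong 2\times\{-n,\dots,n\}$ yields $H\cong 2\times\mathbb{Z}$. Here semidistributivity computes the meets and joins of rungs so that $H$ is closed, while Whitman's condition together with the two-sided unboundedness of $(a_i)$ and $(b_j)$ is what prevents the rungs from collapsing. This caution is essential: by the $M_3$–$N_5$ theorem a sublattice of a free lattice may contain $N_5$, and $N_5$ is itself semidistributive and satisfies Whitman's condition, so the grid structure of $H$ cannot follow from the semidistributive and Whitman conditions alone.

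Next I would define the partition by $A=\{x: x\,b\le a\}$ and $B=\{x: b\le x+a\}$. Immediately $a\in A$ and $b\in B$, and one checks $(0,k)\in A$ and $(1,k)\in B$, so the definition is consistent with the placement demanded in the statement. It is convenient to reformulate these as $x\in A \iff x\,a = x\,b$ and $x\in B \iff x+a = x+b$. The theorem then reduces to two facts: that $A\cup B=L$ and that $A\cap B=\varnothing$. For exhaustiveness I would use the finiteness of intervals to force the monotone sequences $x\,(1,k)$ and $x+(0,k)$ to stabilize (they are trapped in finite intervals such as $[x\,(1,k_0),x]$), and then apply the semidistributive laws to the \emph{incomparable} cross-rung pairs $\{(0,k+1),(1,k)\}$, whose meet and join are the adjacent rungs $(0,k)$ and $(1,k+1)$, to propagate comparabilities along the ladder until $x$ is pinned to exactly one side.

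With the partition in hand, property (1) for $B$ is essentially a restatement of $b\le x+a$ transported up the ladder, and the dual gives the stated condition for $A$. For property (2), given $x\in B\setminus H$, I would argue that property (1) and the stabilization of the meet/join sequences confine every nontrivial interaction of $x$ with $H$ to a finite band of rungs; outside that band $x$ lies uniformly above or below $H$, so all elements of $H_x$ generated from $x$ and the rungs lie inside a single finite interval, whence $H_x\setminus H$ is finite. Closure of $B$ under join is immediate from $B=\{x:b\le x+a\}$, since $b\le x+a\le (x+y)+a$; closure under meet, and the dual statements for $A$, would use Whitman's condition together with the covering $a\prec b$.

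The hard part will be disjointness, equivalently ruling out a \emph{straddling} element, an $x$ with $x+a=x+b$ and $x\,a=x\,b$. Such an $x$ produces a copy of $N_5$ on $\{x\,a,\,a,\,b,\,x,\,x+a\}$, and this is permitted in a sublattice of a free lattice, so disjointness cannot possibly be a local phenomenon; it must be extracted from the spanning structure. My expectation is that a straddler, played against the unbounded sequences $(a_i)$ and $(b_j)$ through the semidistributive laws at the stabilization points above, yields an infinite strictly monotone chain trapped inside a finite interval, contradicting the finiteness hypothesis. The genuine difficulty, and the step I expect to resist a routine treatment, is making this propagation precise: the rail elements $(0,k)$ and the $(1,k)$ are pairwise comparable and so furnish only trivial instances of semidistributivity, so the argument must repeatedly route through the incomparable cross-rung pairs to obtain nontrivial conclusions, and it is exactly there that both the unboundedness of the spanning cover and the finiteness of intervals must be spent.
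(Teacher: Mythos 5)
Your proposal and the paper diverge at exactly the point you yourself flag as hard, and that point is a genuine gap, not a routine verification. You partition by the cover-local conditions $A=\{x: xb\le a\}$, $B=\{x: b\le x+a\}$, and then must rule out a straddler with $xa=xb$ and $x+a=x+b$. (Incidentally, exhaustiveness needs none of your stabilization machinery: if $xb\nleq a$ then $a< xb+a\le b$, so $xb+a=b$ by the cover and $x\in B$.) But disjointness is the entire content of the splitting, and your $N_5$ observation correctly shows it cannot be local --- yet you supply no non-local mechanism, only the expectation that a straddler ``yields an infinite chain in a finite interval.'' The paper never defines $A$ and $B$ by a cover-local formula at all; it excludes straddling \emph{relative to the ladder}, using that the rungs are prime intervals of $L$ by construction: if $(0,m)<x<(1,n)$ with $x\nleq(1,m)$ and $x\ngeq(0,n)$, then $x+(0,n)=(1,n)$ and $(1,m)x=(0,m)$, and applying $(SD_\vee)$ to $(1,m)+(0,n)=(1,n)$ collapses $(0,n)=(1,n)$, a contradiction. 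Likewise your property (2) is only waved at (``confine every nontrivial interaction to a finite band''), whereas the bulk of the paper's proof lives precisely there: the gadget $G_L((0,m);(1,m-1),x(1,m))$ analyzed via remark \ref{gadgets in ladders}, the Whitman-condition iteration $x', x'',\dots$ terminating because intervals are finite, and the re-routing of $H$ to a modified ladder $H'$ containing $x(0,m)$ and $(1,m-1)+x(0,m)$.

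Your construction of $H$ is also not sound as stated. Taking $(0,1)$ to be an atom of $[a,a_1]$ and setting $(1,1)=(0,1)+b$ controls neither non-collapse nor closure: at the next step nothing prevents every atom of $[(0,1),a_i]$ from lying below $(1,1)$ (forcing $(1,2)=(1,1)$), and nothing forces $(0,2)(1,1)=(0,1)$, since there can be $w$ with $(0,1)<w<(1,1)$, $wb=a$, $w+b=(1,1)$ that is the true meet, breaking closure of your chosen $H$. The paper engineers these properties rather than checking them afterwards: it first extracts subsequences $a'_n$, $b'_n$ along which the joins $a_i+b$ strictly increase (possible because the chains are unbounded and intervals are finite), defines the upper rail by $(1,n)=a'_n+(1,0)$, and then chooses each $(0,n+1)$ as a \emph{coatom} of $[a'_{n+1}+(0,n),(1,n+1)]$, with $(SD_\wedge)$ pinning $(a'_{n+1}+(0,n))(0,1)=(0,0)$; this is what makes the rungs prime intervals, which is in turn the input to the straddler-exclusion step above. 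In fairness, the paper declares part of its own argument a sketch, but it does contain concrete mechanisms --- stabilized subsequences, coatom choices with the semidistributive laws, the prime-interval plus $(SD_\vee)$ exclusion, and the gadget-plus-Whitman iteration --- at exactly the three points (ladder closure, disjointness, property (2)) that your proposal leaves as expectations.
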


\begin{proof} The last part of this proof can be thought of as a proof sketch. Let $a \prec b$ be a spanning cover of $L$ with $L$ being as in the above theorem. Since $a \prec b$ is a spanning cover, let $a < a_1 < a_2 < a_3 < \dots$ have no upper bound in $L$, $b > b_1 > b_2 > b_3 > \dots$ have no lower bound in $L$, $b \parallel a_k$ for all $k$, and $a \parallel b_k$ for all $k$. Semidistributivity is used only in a few places, but it appears to remove many troublesome cases. So much so that a possible explicit construction of all lattices $L$ as specified above appears to be obtainable if $L$ is further assumed to have a width of three. \\
	
	We first construct $H$ as specified above. We note that $a_ib = a$ and $a + b_j = b$ for all $i$ and $j$. With this we find two subsequences $a < a_{i_1} < a_{i_2} < \dots$ and $b > b_{j_1} > b_{j_2} > \dots$ like so. Define $i_1$ to be the largest index $i$ such that $a_i + b = a + b$; this is possible since $L$ is assumed to have no infinite intervals. For positive integers $n$ define $i_{n+1}$ to be the largest index $i$ such that $a_i + b = a_{i_n+1} + b$. In a similar way, define $b > b_{j_1} > b_{j_2} > \dots$ from $(b_i)_i$. We abbreviate these sequences by setting $a'_n = a_{i_n}$ and $b'_n = b_{j_n}$. Now we define $(0,0) = a$ and $(1,0) = b$. For $n > 0$, define $(1,n) = a'_n + (1,0)$ and $(0,-n) = (0,0) b'_n$. \\
	
	To finish constructing our desired sublattice, we will need both $(SD_\vee)$ and $(SD_\wedge)$. Let $x_1$ be a coatom in $[a'_1, (1,1)]$. Since $(1,1)$ is the \emph{least} upper bound of $(0,1)$ and $a'_1$, $x_1 \ngeq (0,1)$. So we have $(0,1) > x_1 (0,1) \geq a'_1 (0,1) = (0,0)$. Hence, $x_1 (0,1) = (0,0)$. So we set $(0,1) = x_1$. Let $n$ be a positive integer and assume that $(0,n)$ has already been defined. Since $(0,n) (0,1) = (0,0)$ and $a'_{n+1} (0,1) = (0,0)$ we have $(a'_{n+1} + (0,n)) (0,1) = (0,0)$ by $(SD_\wedge)$. In particular, $a'_{n+1} \leq a'_{n+1} + (0,n) < (1,n+1)$. Now set $x_{n+1}$ to be a coatom in $[a'_{n+1} + (0,n), (1,n+1)]$ and repeat the above argument. In a similar way, $(SD_\vee)$ can be used to construct $\{(1,-n) : n = 1,2,3,\dots \}$. Since $(0,k)$ is covered by $(0,k+1)$ for all $k$, the resulting subset, $H = \{(i,k) : i \in 2, k \in \mathbb{Z} \}$, can be seen to be a sublattice of $L$ isomorphic to $2 \times \mathbb{Z}$. Moreover, $H$ has no upper or lower bounds in $L$. \\
	
	Here, only $(SD_\vee)$ or $(SD_\wedge)$ need be assumed. This application of the semidistributive laws appears to be very beneficial. Suppose that $x \in L \backslash H$ satisfies $(0,m) < x < (1,n)$ in $H'$ for some $m < n$ but that $x \nleq (1,m)$ and $x \ngeq (0,n)$. We note that because $[(0,m),(1m)]$ and $[(0,n),(1,n)]$ are prime intervals in $L$, $x + (0,n) = (1,n)$ and $(1,m)x = (0,m)$. But then, as $(1,m) + (0,n) = (1,n)$, $(SD_\vee)$ implies that $(0,n) = (1,m)x + (0,n) = (1,n)$. \\
	
	When $x \in L \backslash H$ and, for some $n$, that $(1,n) < x < (1,n+1)$, the above theorem can be seen to hold after considering the gadget $G_L((0,n+1);(1,n), x)$ where $(1,n) + (0,n+1) = x + (0,n+1) = (1,n+1)$ and remark \ref{gadgets in ladders}. The dual case automatically follows by symmetry. So lastly, assume without loss of generality that $x \in L \backslash H$ and, for some $m$, that $x > (1,m-1)$ yet $y \parallel (1,m)$. Considering the gadget $G_L((0,m);(1,m-1),x(1,m))$, where $(1,m-1) + (0,m) = x(1,m) + (0,m) = (1,m)$, remark \ref{gadgets in ladders} leads to the following generalized diagram for some $n > m$ \\
	
	\begin{fig}
		\begin{tikzpicture}[scale = 3]
		\node (1m-1) at (0,0) {$(1,m-1)$};
		\node (1m-1+x0m) at (0.5,0.25) {$(1,m-1) + x(0,m)$};
		
		\node (x1m) at (1.5,0.75) {$x(1,m)$};
		\node (x+1m) at (2.5,1.25) {$x + (1,m)$};
		\node (x) at (1.75, 1.5) {$x$};
		\node (1m) at (2.25,0.5) {$(1,m)$};
		
		\node (1n) at (3.5,1.75) {$(1,n)$};
		\node (0m-1) at (0.5,-1) {$(0,m-1)$};
		\node (x0m) at (1,-0.75) {$x(0,m)$};
		\node (0m) at (2.5,0) {$(0,m)$};
		\node (0n) at (4,0.75) {$(0,n)$};
		
		\draw (1m-1) -- (0m-1) -- (x0m) -- (0m) -- (0n) -- (1n) -- (x+1m) -- (x) -- (x1m) -- (1m-1+x0m) -- (1m-1);
		\draw (x1m) -- (1m) -- (x+1m);
		\draw (1m-1+x0m) -- (x0m);
		\draw (1m) -- (0m);
		\end{tikzpicture}
	\end{fig}
	
	This diagram is generalized, as the possibility when $x + (1,m) = (1,n)$ is also considered. This Hasse diagram is drawn at a slanted angle, the reader should tilt her/his head slightly to the right (in a later draft, the picture will, if necessary, be rotated). Firstly, assume that $(x + (1,m))(0,n) > (0,m)$. Then we consider $(1,m) + (x + (1,m))(0,n)$ and note that $x((1,m) + (x + (1,m))(0,n)) > x(1,m)$, for assuming otherwise would force $x((1,m) + (x + (1,m))(0,n)) < (1,m-1) + (0,m)$ which would violate Whitman's condition. \\
	
	We can then set $x' = x((1,m) + (x + (1,m))(0,n))$ and look at $x'$, $(1,m)$, $x' + (1,m)$, and $x'(1,m)$. From this we either obtain $(x'+(1,m)) > (x + (1,m))(0,n)$, $(x' + (1,m))(x + (1,m))(0,n) = (0,m)$, or $(x' + (1,m))(x + (1,m))(0,n) > (0,m)$. In the third case, we resort to the above argument to produce $x''$ and in the first two cases consider the below analysis. With $x^{(1)} = x'$, $x^{(2)} = x''$, $\dots$ we note the following. Since all intervals in $L$ are finite, only the first two cases will be possible for $x^{(N)}$ when $N$ is sufficiently large. \\
	
	Let $H' = (H \cup \{ (1,m-1) + x(0,m), x(0,m) \}) \backslash (\{ (0,k) : m < k < n \} \cup \{ (1,k) : m < k < n \})$. Certainly, $H' \cong 2 \times \mathbb{Z}$. To prove the theorem it is enough to show that when $(x + (1,m))(0,n) = (0,m)$ or $x + (1,m) > (0,n)$, $H' \cup \{x, x + (1,m), x(1,m) \}$ is a sublattice of $L$. \\
	
	To see this, we note that, for all $n' \geq n$, $(1,n') \geq x + (0,n') \geq (1,m-1) + (0,n') = (1,n')$ and that, for all $m' \geq m$, $x(0,m') = x(1,m)(0,m') = x(0,m)$.
	
\end{proof}

\begin{remark}
	An advantage of the above theorem, and the proof of this theorem, is that it provides a possible direction of attack when analysing infinite semidistributive lattices $L$, which satisfy $(W)$, have a spanning cover, \emph{and have finite width}. In particular, conjectures \ref{conjecture 1} and \ref{conjecture 2}, if correct, seem to, with the above, outline a method of determining all infinite semidistributive lattices $L$, which satisfy $(W)$, have a spanning cover, \emph{and have a width of three}. With such knowledge, it might be possible to determine countable sublattices of free lattices that have width three. A more ambitious endeavour may include finding counting sublattices of free lattices of finite width.
\end{remark}

A lattice does not necessarily have finite width. For example, consider the power set of an infinite set ordered by set inclusion or the free lattice $FL(3)$. Contrasting with these examples, is it possible that many examples of sublattices of free lattices that have spanning covers are ``not that much wider'' than a finite width lattice? We express this question more precisely below.

\begin{problem} \label{problem 1}
	Let $L$ be an infinite semidistributive lattice satisfying Whitman's condition where every interval in $L$ is finite and where $L$ has a spanning cover. Then is it true that every antichain in $L$ is finite?
\end{problem}

Consider the free lattice $FL(\kappa)$. This structure is semidistributive and satisfies Whitman's condition. When $\kappa \geq 3$ it is well-known that $FL(\kappa)$ has a sublattice isomorphic to $FL(\omega)$ and so has infinite antichains. If $\kappa$ is finite then $FL(\kappa)$ would have both a greatest and a least element; hence it is impossible for $FL(\kappa)$ to have a spanning cover. But if $\kappa$ is infinite, then any interval has either one element or infinitely many. To see this we use a standard argument from \cite{FL}, found in Corollary 9.11 of \cite{FL}. Let $v \leq u$ in $FL(\kappa)$, and for $x \in FL(\kappa)$ define var$(x)$ to be the set of elements that are in the generating set, $X$, of $FL(\kappa)$ and are present in any representation of $v$ as joins and meets of elements in $S$. Then $S = \{ (x \wedge u) \vee v : x \in X \backslash(\text{var}(u) \cup \text{var}(v))\} \subseteq [u,v]$ is infinite because both var$(u)$ and var $(v)$ are finite. Hence, it is impossible to find a covering pair, $a \prec b$, in $FL(\kappa)$. \\

We note that our investigation of countable sublattices, $L$, of a free lattice, where $L$ is assumed to have a spanning cover, originates from this class of lattices: Linearly indecomposble distributive sublattices, $L$, where $L$ is a sublattice of a free lattice and both $L \neq D(L)$ and $L \neq D^d(L)$. However, how many such countable sublattices actually satisfy $L \neq D(L)$ and $L \neq D^d(L)$? Replicating the above proof also proves that if $L$ has a width of $2$ and every interval of $L$ is finite then $L$ definitely satisfies $L \neq D(L)$ and $L \neq D^d(L)$. Unfortunately, such a class of lattices is very restricted as their diagrams can be obtained from $2 \times \mathbb{Z}$ by adding a finite number of points between pairs $(0,k), (0,k+1)$ or pairs $(1,k), (1,k+1)$. \\

Which lattices are countable sublattices of free lattices is not well-known. Any hope of even classifying such lattices has not been realized yet. For the open problem of this article I will sketch the following. It is an extension of figure \ref{classification 1}, since all distributive sublattice of free lattices are countable. Only the set inclusions known for this partial order are drawn. \\

\begin{fig} \label{classification 2}
	\begin{center}
		\begin{tikzpicture}[scale = 2.8]
		\node (TCC) at (0,0) {$The$ $Countable$ $Case$};
		\node (10) at (-2,-1) {$L = D(L), L \neq D^d(L)$};
		\node (01) at (0,-1) {$L \neq D(L), L = D^d(L)$};
		\node (11) at (-1,-2) {$L = D(L), L = D^d(L)$};
		\node (00) at (1.75,-1) {$L \neq D(L), L \neq D^d(L)$};
		
		\draw (TCC) -- (10) -- (11) -- (01) -- (TCC) -- (00);
		
		\node (Proj) at (-1,-2.33) {$projective$};
		\node (ShTr) at (0.5,-2.66) {$sharply$ $transferable$};
		\node (FinGen) at (-2,-2.66) {$finitely$ $generated$};
		\node (Fin) at (-1,-3) {$finite$};
		
		\node (SpCov) at (2.65,-0.5) {$has$ $a$ $spanning$ $cover$};
		\node (SpCov2) at (2.1,-2.2) {$2 \times \mathbb{Z}$ $and$ $related$ $lattices$};
		
		\draw (11) -- (Proj) -- (FinGen) -- (Fin);
		\draw (11) -- (Proj) -- (ShTr) -- (Fin);
		\draw (00) -- (SpCov2);
		\draw (Fin) -- (ShTr);
		\draw (TCC) -- (SpCov) -- (SpCov2);
		\end{tikzpicture}
	\end{center}
\end{fig}

It is hoped that this diagram might assist in a possible future classification of such structures, which would create progress towards the countable case of the open problem. We conclude by quoting Reinhold at the end of his 1995 paper, \cite{WDLFL}: "\textit{\dots we are therefore still far away from a characterization of arbitrary sublattices of free lattices}"


\begin{thebibliography}{99}
\bibitem{DSFL} F. Galvin and Bjarni J\'onsson,
Distributive Sublattices of a Free Lattice 
\textit{1959, supported by NSF - see also: Canadian J. Math. 13 (1961), 265-272}

\bibitem{ILO} B.A. Davey and H.A. Priestley, 
Introduction to Lattices and Order (Second Edition) \textit{Cambridge University Press, 2002}

\bibitem{FL} R. Freese, J. Je\v{z}ek, 
and J. B. Nation, Free Lattices 
\textit{Mathematical Surveys and Monographs, Volume 42 (AMS)}, 1995

\bibitem{VL} P. Jipsen and H. Rose, 
Varieties of Lattices, \textit{Springer Verlag, Lecture Notes in Mathematics 1533} 1992

\bibitem{EBNMV} R. McKenzie, 
Equational Bases and Nonmodular Lattice Varieties \textit{Transactions of the American Mathematical Society Volume 174, December 1972}

\bibitem{LTF} G. Gr\"atzer,
\textit{Lattice Theory: Foundation}
Birkh\"auser, 2010
\end{thebibliography}
\end{document}